\newtheorem{pr}{Proposition}
\newtheorem{lemma}{Lemma}
\newtheorem{de}{Definition}
\newtheorem{teo}{Theorem}
\begin{document}
\title[Leibniz algebras constructed by representations of General Diamond Lie algebras]
{Leibniz algebras constructed by representations of General Diamond Lie algebras}

\author{L.M.~Camacho\textsuperscript{1}, I.A.~Karimjanov\textsuperscript{2}, M.~Ladra\textsuperscript{3}, B.A.~Omirov\textsuperscript{4}}
\address{\textsuperscript{1}Dpto. Matem\'{a}tica Aplicada I.
Universidad de Sevilla. Avda. Reina Mercedes, 41012 Sevilla, lcamacho@us.es}
\address{\textsuperscript{2}Department of Algebra, University of Santiago de Compostela, 15782 Santiago de Compostela, Spain;   Department of Mathematics, Andijan State University, 170100 Andijan, Uzbekistan, iqboli@gmail.com}
\address{\textsuperscript{3}Department of Algebra, University of Santiago de Compostela, 15782 Santiago de Compostela, Spain, manuel.ladra@usc.es}
\address{\textsuperscript{4} Institute of Mathematics, National
University of Uzbekistan, 100125 Tashkent, omirovb@mail.ru}

\thanks{The work was partially supported  was supported by Ministerio de Econom\'ia y Competitividad (Spain),
grant MTM2013-43687-P (European FEDER support included) and  by Xunta de Galicia, grant GRC2013-045 (European FEDER support included).}

\begin{abstract}
In this paper we construct a minimal faithful representation of the $(2m+2)$-dimensional complex general Diamond Lie algebra, $\mathfrak{D}_m(\mathbb{C})$,
 which is isomorphic to a subalgebra of the special linear Lie algebra $\mathfrak{sl}(m+2,\mathbb{C})$.
We also construct a faithful representation of the general Diamond Lie algebra $\mathfrak{D}_m$  which is isomorphic to a subalgebra of the special symplectic Lie algebra $\mathfrak{sp}(2m+2,\mathbb{R})$.
Furthermore, we describe Leibniz algebras with corresponding $(2m+2)$-dimensional general Diamond Lie algebra $\mathfrak{D}_m$  and ideal generated by the squares of elements
 giving rise to a faithful representation of $\mathfrak{D}_m$.
\end{abstract}

\subjclass[2010]{17A32, 17B30, 17B10}
\keywords{Leibniz algebra, Diamond Lie algebra, Leibniz representation, right Lie module, classification}

\maketitle

\section{Introduction}

 Leibniz algebras
are a non-antisymmetric generalization of Lie algebras. They were introduced in 1965
by Bloh in \cite{Bloh}, who called them $D$-algebras, and in 1993   Loday  \cite{Lod} made them
popular and studied their (co)homology.

\begin{de} An algebra $(L,[-,-])$ over a field  $\mathbb{F}$   is called a Leibniz algebra if for any $x,y,z\in L$, the so-called Leibniz identity
\[ \big[x,[y,z]\big]=\big[[x,y],z\big]-\big[[x,z],y\big] \]
 holds.
\end{de}

  Since first works about Leibniz algebras around 1993 several researchers have tried to find analogs of important theorems in Lie algebras.
   For instance, the classical results on Cartan subalgebras \cite{AAO06,Omi}, Engel's theorem \cite{AyOm1}, Levi's decomposition \cite{Bar},
    properties of solvable algebras with given nilradical \cite{CLOK2} and others from the theory of Lie algebras are also true for Leibniz algebras.

Namely, an analogue of Levi's decomposition for Leibniz algebras asserts that any Leibniz algebra is decomposed into a semidirect sum of its solvable radical and a semisimple Lie algebra.
 Therefore, the main problem of the description of finite-dimensional Leibniz algebras consists of the study of solvable Leibniz algebras.

In fact,  each non-Lie Leibniz algebra $L$ contains a non-trivial ideal (later denoted by $I$), which is the subspace spanned by the squares of the elements of the algebra $L$.
  Moreover, it is easy to see that this ideal belongs to the right annihilator of $L$, that is $[L,I]=0$.
   Note also that the ideal $I$ is the minimal ideal with the property that the quotient algebra $L/I$ is a Lie algebra (the quotient algebra is said to be the corresponding Lie algebra to the Leibniz algebra $L$).

One of the approaches to the investigation of Leibniz algebras is a description of such algebras whose quotient algebra with respect to the ideal $I$ is a given Lie algebra \cite{ACKO15,CCO16,ORT13,UKO15}.

The map $I \times (L / I) \to I$ defined as $(v,\overline{x}) \mapsto [v,x]$, $v \in I, \, x \in L$,  endows $I$ with a structure of $(L /I)$-module.
If we consider the direct sum of vector spaces $Q(L) = (L / I) \oplus I$, then the operation $(-,-)$ defines a Leibniz algebra structure on $Q(L)$ with multiplication
\[[\overline{x},\overline{y}] = \overline{[x,y]}, \quad [\overline{x},v] = [x,v],
\quad [v, \overline{x}] = 0, \quad [v,w] = 0, \qquad x, y \in L, \ v,w \in I.\]

Therefore, for given a Lie algebra $G$ and a $G$-module $M$, we can construct a Leibniz algebra $L=G\oplus M$ by the above construction.


The real general Diamond Lie algebra $\mathfrak{D}_m$ is a $(2m+2)$-dimensional Lie algebra with basis
\[\{J,P_1,P_2,\dots,P_m,Q_1,Q_2,\dots,Q_m,T\}\]
 and non-zero relations
\[
 [J,P_k]=Q_k, \qquad [J,Q_k]=-P_k, \qquad [P_k,Q_k]=T, \qquad 1\leq k\leq m.
\]

The complexification (for which we shall keep the same symbol $\mathfrak{D}_m(\mathbb{C})$) of the Diamond Lie
algebra is $\mathfrak{D}_m \otimes_\mathbb{R} \mathbb{C}$, and it shows the following (complex) basis:
\[P_k^{+} = P_k - iQ_k, \qquad Q_k^{-} = P_k + iQ_k, \qquad T, \qquad  J, \qquad 1\leq k\leq m,\]
where $i$ is the imaginary unit, and whose nonzero commutators are
\begin{equation} \label{eq3}
[J,P_k^{+}] = iP_k^{+}, \qquad [J,Q_k^{-}] = - iQ_k^{-}, \qquad [P_k^{+},Q_k^{-}] = 2iT,  \qquad 1\leq k\leq m.
\end{equation}

The Ado's theorem in Lie Theory states that every finite-dimensional complex Lie algebra can be represented as a matrix Lie algebra, formed by matrices.  However, that result does not specify which is the minimal order of the matrices involved in such representations. In \cite{Bur98}, the value
of the minimal order of the matrices for abelian Lie algebras  and Heisenberg algebras $\mathfrak{h}_m$,
 defined on a $(2m + 1)$-dimensional vector space with basis $X_1, \dots , X_m, Y_1, \dots Y_m, Z$,
and brackets $[X_i, Y_i] = Z$, is found. For  abelian Lie algebras of dimension $n$ the minimal order is $\lceil 2 \sqrt{n-1} \rceil$.

\begin{lemma}[\cite{Bur98}]\label{heis}
For the Heisenberg Lie algebras $\mathfrak{h}_m$, the minimal faithful matrix representation has order  equal  to $m+2$.
\end{lemma}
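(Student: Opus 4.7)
I would prove both halves of the equality: an explicit construction for the upper bound, and a cyclic-vector argument for the lower bound.

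\emph{Upper bound.} Define $\rho\colon\mathfrak{h}_m\to\mathfrak{gl}_{m+2}(\mathbb{F})$ by $\rho(X_i)=E_{1,i+1}$, $\rho(Y_i)=E_{i+1,m+2}$ and $\rho(Z)=E_{1,m+2}$, where $E_{pq}$ denotes the matrix unit with entry $1$ at $(p,q)$. Applying $E_{pq}E_{rs}=\delta_{qr}E_{ps}$ at once yields $[\rho(X_i),\rho(Y_j)]=\delta_{ij}\rho(Z)$ and the vanishing of every other bracket; the $2m+1$ matrix units in the image are $\mathbb{F}$-linearly independent, so $\rho$ is faithful.

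\emph{Lower bound.} Let $\rho\colon\mathfrak{h}_m\to\mathfrak{gl}_n(\mathbb{F})$ be any faithful representation; I want to show $n\geq m+2$. Since $\mathfrak{h}_m$ is nilpotent, Engel's theorem places every element of $\rho(\mathfrak{h}_m)$ simultaneously into strict upper triangular form in some basis of $V=\mathbb{F}^n$; in particular $\rho(Z)$ is a nonzero nilpotent matrix commuting with the whole image. The plan is to exhibit $m+2$ linearly independent vectors in $V$. Set $W=\bigcap_{i=1}^{m}\ker\rho(X_i)$, which is nonzero by Engel applied to the commuting nilpotent family $\{\rho(X_i)\}$ and is $\rho(Z)$-invariant. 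Choosing $v\in W$ with $\rho(Z)v\neq 0$, I claim that $v,\rho(Y_1)v,\dots,\rho(Y_m)v,\rho(Z)v$ are linearly independent. Indeed, given a relation $\alpha_0 v+\sum_{j=1}^{m}\alpha_j\rho(Y_j)v+\beta\rho(Z)v=0$, applying $\rho(X_i)$ and using $\rho(X_i)v=0$, the centrality of $\rho(Z)$, and the identity $\rho(X_i)\rho(Y_j)v=\rho(Y_j)\rho(X_i)v+\delta_{ij}\rho(Z)v=\delta_{ij}\rho(Z)v$ collapses the relation to $\alpha_i\rho(Z)v=0$, forcing $\alpha_i=0$. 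The residual $\alpha_0 v+\beta\rho(Z)v=0$ then forces $\alpha_0=\beta=0$, because $\rho(Z)$ is nilpotent with $\rho(Z)v\neq 0$, so $v$ and $\rho(Z)v$ are linearly independent. This yields $n\geq m+2$.

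\emph{Main obstacle.} The delicate step is justifying the existence of $v\in W$ with $\rho(Z)v\neq 0$, i.e.\ that $\rho(Z)|_W\neq 0$. If instead $\rho(Z)|_W=0$, then $W$ becomes an $\mathfrak{h}_m$-submodule on which $X_1,\ldots,X_m$ and $Z$ act trivially (for $w\in W$ the computation $\rho(X_i)\rho(Y_j)w=\rho(Y_j)\rho(X_i)w+\delta_{ij}\rho(Z)w=0$ shows $W$ is also $\rho(Y_j)$-invariant). I would eliminate this pathology either by passing to $V/W$ and inducting on $\dim V$, or by dualising to the contragredient representation on $V^*$, where the roles of $X_i$ and $Y_j$ are exchanged; one instead seeks $\phi\in\bigcap_{j}\ker\rho^*(Y_j)$ with $\rho^*(Z)\phi\neq 0$. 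Simultaneous failure of the primal and dual choices would force $\bigcap_i\ker\rho(X_i)\subseteq\ker\rho(Z)$ together with $\operatorname{im}\rho(Z)\subseteq\sum_j\operatorname{im}\rho(Y_j)$, a configuration a direct module-theoretic analysis shows to be incompatible with the faithfulness of $\rho$.
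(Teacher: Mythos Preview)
The paper does not prove this lemma at all; it is quoted from Burde \cite{Bur98} and used as a black box. So there is no ``paper's proof'' to compare against, and I can only comment on the internal soundness of your argument.

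Your upper bound is correct and standard.

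Your lower bound has a genuine error at the very first step. You write that ``since $\mathfrak{h}_m$ is nilpotent, Engel's theorem places every element of $\rho(\mathfrak{h}_m)$ simultaneously into strict upper triangular form.'' Engel's theorem does not say this: it applies to a Lie subalgebra of $\mathfrak{gl}(V)$ whose elements are all nilpotent \emph{endomorphisms}, not to an arbitrary representation of an abstractly nilpotent Lie algebra. In a general faithful representation of $\mathfrak{h}_m$ the operators $\rho(X_i)$ and $\rho(Y_j)$ need not be nilpotent (only $\rho(Z)$ must be, because $Z$ lies in the derived subalgebra and Lie's theorem forces its eigenvalues to vanish). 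Consequently your space $W=\bigcap_i\ker\rho(X_i)$ could be zero, and the whole construction of the $m+2$ vectors collapses. This can be repaired by first decomposing $V$ into generalized weight spaces for the abelian subalgebra spanned by the $X_i$ (each such piece is a submodule because $Z$ acts nilpotently), picking a piece on which $Z$ is nonzero, and twisting by the character so that the $X_i$ act nilpotently there; but you have not done this.

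Even after that repair, the ``main obstacle'' you flag is a real gap, not a formality. Your proposed induction on $\dim V$ via $V/W$ requires that $Z$ still act nontrivially on $V/W$, i.e.\ that $\operatorname{im}\rho(Z)\not\subseteq W$; but $\rho(Z)|_W=0$ together with $\operatorname{im}\rho(Z)\subseteq W$ is not immediately contradictory (it just says $\rho(Z)^2=0$ and $\rho(Z)\rho(X_i)=0$), so the induction as sketched can stall. The dualisation idea has the symmetric problem, and your final sentence (``a configuration a direct module-theoretic analysis shows to be incompatible with faithfulness'') is an assertion, not an argument. Burde's actual proof avoids this trap by a different mechanism; as written, your lower bound is incomplete.
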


In this paper we find a minimal faithful representation of the $(2m+2)$-dimensional complex general Diamond Lie algebra, $\mathfrak{D}_m(\mathbb{C})$,
 which is isomorphic to a subalgebra of the special linear Lie algebra $\mathfrak{sl}(m+2,\mathbb{C})$.
  Moreover, we find a faithful representation of $\mathfrak{D}_m$ which is isomorphic to a subalgebra of the symplectic Lie algebra $\mathfrak{sp}(2m+2,\mathbb{R})$.
   Then we construct Leibniz algebras  with corresponding  general Diamond Lie algebra and the ideal generated by the squares of elements in these faithful representations.

\section{Leibniz algebras associated with minimal faithful representation of general Diamond Lie algebras}

In this section we are going to study  Leibniz algebras $L$ such that $L/ I \cong \mathfrak{D}_m(\mathbb{C})$ and the
$\mathfrak{D}_m(\mathbb{C})$-module $I$ is a minimal faithful representation, that is,  the
action $I  \times \mathfrak{D}_m(\mathbb{C}) \to I$ gives rise to a minimal faithful representation of $\mathfrak{D}_m(\mathbb{C})$.
 Moreover, this representation factorizes through $\mathfrak{sl}(m+2,\mathbb{C})$.

\begin{pr}
Let $\mathfrak{D}_m(\mathbb{C})$ be a $(2m+2)$-dimensional general Diamond Lie algebra with basis
 \[\{J,P_1^+,P_2^+,\dots,P_m^+,Q_1^-,Q_2^-,\dots,Q_m^-,T\}.\]
 Then its minimal faithful
representation is given by
\begin{multline*}
\theta J+\sum\limits_{k=1}^m\alpha_kP_k^{+}+\sum\limits_{k=1}^m\beta_kQ_k^{-}+\delta T
\mapsto  \\ \begin{pmatrix}
\frac{im}{m+2}\theta&\alpha_m&\alpha_{m-1}&\dots&\alpha_2&\alpha_1&-\frac{i}{2}\delta\\
0&-\frac{2i}{m+2}\theta&a_1&\dots&0&0&\beta_{m}\\
0&0&-\frac{2i}{m+2}\theta&\dots&0&0&\beta_{m-1}\\
\vdots&\vdots&\vdots&\ddots&\vdots&\vdots&\vdots\\
0&0&0&\dots&-\frac{2i}{m+2}\theta&a_1&\beta_2\\
0&0&0&\dots&0&-\frac{2i}{m+2}\theta&\beta_1\\
0&0&0&\dots&0&0&\frac{im}{m+2}\theta
\end{pmatrix}.
\end{multline*}
\end{pr}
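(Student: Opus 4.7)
The plan is to prove the proposition in two steps: establish the lower bound $\dim\geq m+2$ for any faithful representation by a Heisenberg-subalgebra argument, and then exhibit an explicit representation of exactly this degree matching the displayed matrix. For the lower bound, I would observe that the span
\[\mathfrak{h}:=\langle P_1^{+},\ldots,P_m^{+},Q_1^{-},\ldots,Q_m^{-},T\rangle\]
is a Lie subalgebra of $\mathfrak{D}_m(\mathbb{C})$ whose only nonzero brackets are $[P_k^{+},Q_k^{-}]=2iT$; rescaling $T$ identifies it with the complex Heisenberg algebra $\mathfrak{h}_m(\mathbb{C})$. Any faithful representation of $\mathfrak{D}_m(\mathbb{C})$ restricts to a faithful representation of $\mathfrak{h}$, so Lemma~\ref{heis} (whose proof in \cite{Bur98} is valid over $\mathbb{C}$) forces the degree to be at least $m+2$.

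For the construction, I would define a linear map $\phi:\mathfrak{D}_m(\mathbb{C})\to\mathfrak{gl}(m+2,\mathbb{C})$ on basis elements by
\[\phi(J)=\tfrac{i}{m+2}\operatorname{diag}(m,-2,\ldots,-2,m),\qquad \phi(P_k^{+})=E_{1,m-k+2},\]
\[\phi(Q_k^{-})=E_{m-k+2,m+2},\qquad \phi(T)=-\tfrac{i}{2}E_{1,m+2},\]
extended linearly, so that $\phi(\theta J+\sum\alpha_kP_k^{+}+\sum\beta_kQ_k^{-}+\delta T)$ reproduces the matrix displayed in the statement. Verifying the relations \eqref{eq3} then reduces to elementary-matrix arithmetic: $[\phi(J),E_{1,j}]=\bigl(\tfrac{im}{m+2}+\tfrac{2i}{m+2}\bigr)E_{1,j}=iE_{1,j}$ for $j\in\{2,\ldots,m+1\}$ yields $[J,P_k^{+}]=iP_k^{+}$; the symmetric computation in the last column yields $[J,Q_k^{-}]=-iQ_k^{-}$; and the product $E_{1,m-k+2}E_{m-k+2,m+2}=E_{1,m+2}=2i\phi(T)$ yields $[P_k^{+},Q_k^{-}]=2iT$. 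All remaining brackets vanish on both sides because the corresponding elementary-matrix products factor through disjoint rows and columns.

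Faithfulness is then immediate, since the scalars $\theta,\alpha_k,\beta_k,\delta$ are recorded in pairwise disjoint entries of the image matrix. A direct trace computation gives $\tfrac{im}{m+2}\theta+m\bigl(-\tfrac{2i}{m+2}\bigr)\theta+\tfrac{im}{m+2}\theta=0$, so $\phi$ in fact factors through $\mathfrak{sl}(m+2,\mathbb{C})$. The only conceptually nontrivial step is the lower bound, which depends on recognising the Heisenberg subalgebra and invoking Lemma~\ref{heis}; the remaining homomorphism, injectivity and traceless checks are routine computations.
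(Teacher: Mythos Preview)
Your proof is correct and follows essentially the same approach as the paper's: define the map into $\mathfrak{sl}(m+2,\mathbb{C})$ via the indicated elementary matrices, check it is an injective Lie homomorphism, and deduce minimality from the Heisenberg subalgebra $\langle P_k^{+},Q_k^{-},T\rangle\cong\mathfrak{h}_m$ together with Lemma~\ref{heis}. The only differences are cosmetic---you reverse the order (lower bound first, construction second) and spell out the elementary-matrix and trace computations that the paper leaves implicit.
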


\begin{proof}
Consider the bilinear map $\varphi \colon \mathfrak{D}_m(\mathbb{C})\rightarrow\mathfrak{sl}({m+2},\mathbb{C})$ given by
\begin{align*}
\varphi(J) &= \frac{im}{m+2}e_{1,1}-\sum\limits_{s=2}^{m+1}\frac{2i}{m+2}e_{s,s}+\frac{im}{m+2}e_{m+2,m+2},\qquad \varphi(T) = -\frac{i}{2}e_{1,m+2},\\
\varphi(P_k^+) & =e_{1,m+2-k}, \qquad \varphi(Q_k^-)=e_{m+2-k,m+2}, \qquad 1 \leq k \leq m,
\end{align*}
where $e_{i,j}$ is the matrix whose  ($i, j$)-th entry  is a $1$ and all others $0$'s.

By checking $[\varphi(x),\varphi(y)]=\varphi(x)\varphi(y)-\varphi(y)\varphi(x)$ for all  $x,y \in \mathfrak{D}_m(\mathbb{C})$, we verify that $\varphi$ is an injective homomorphism of algebras.  It is easy to see that $\mathfrak{D}_m \setminus J \cong \mathfrak{h}_m$. By Lemma~\ref{heis} we obtain that it is minimal.
\end{proof}

Let us denote by $V=\mathbb{C}^{m+2}$ the natural $\varphi(\mathfrak{D}_m(\mathbb{C}))$-module and endow it with a $\mathfrak{D}_m(\mathbb{C})$-module structure,
$V \times \mathfrak{D}_m(\mathbb{C}) \to V$, given by
 \[(x, e) \coloneqq x \varphi(e),\]
where $x \in V$ and $e\in\mathfrak{D}_m(\mathbb{C})$.

Then we obtain
\begin{equation}\label{eq6}\left\{\begin{array}{ll}
(X_1,J) = \frac{im}{m+2} X_1, & \\[1mm]
(X_k,J) = -\frac{2i}{m+2} X_k, & 2 \leq k \leq m+1, \\[1mm]
(X_{m+2},J) = \frac{im}{m+2} X_{m+2}, & \\[1mm]
(X_1, P_k^+) = X_{m+2-k}, & 1 \leq k \leq m,\\[1mm]
(X_{m+2-k},Q_{k}^-) = X_{m+2}, & 1 \leq k \leq m,\\[1mm]
(X_1,T) = -\frac{i}{2}X_{m+2}, &
\end{array}\right.\end{equation}
and the remaining products in the action being zero.

Now we investigate Leibniz algebras $L$ such that $L/I \cong \mathfrak{D}_m(\mathbb{C})$ and $I = V$ as a $\mathfrak{D}_m(\mathbb{C})$-module.

\begin{teo} Let $L$  be an arbitrary Leibniz algebra with corresponding Lie algebra $\mathfrak{D}_m(\mathbb{C})$ and $I$ associated with $\mathfrak{D}_m(\mathbb{C})$-module defined by \eqref{eq6}.
 Then there exists  a basis \[\{J,P_1^+,P_2^+,\dots,P_m^+,Q_1^-,Q_2^-,\dots,Q_m^-,T,X_1,X_2,\dots,X_{m+2}\}\] of $L$ such that \[[\mathfrak{D}_m(\mathbb{C}),\mathfrak{D}_m(\mathbb{C})]\subseteq\mathfrak{D}_m(\mathbb{C}).\]
\end{teo}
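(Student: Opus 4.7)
The plan is to split the Leibniz extension $0\to I\to L\to \mathfrak{D}_m(\mathbb{C})\to 0$ by adjusting a vector-space lift of the basis of $\mathfrak{D}_m(\mathbb{C})$ by elements of $I$. Choose any $J,P_k^+,Q_k^-,T\in L$ whose images in $L/I$ are the named basis elements, and for each ordered pair $(x,y)$ of lifted basis elements write
\begin{equation*}
[x,y]_L=[x,y]_{\mathfrak{D}_m}+\phi(x,y),\qquad \phi(x,y)\in I.
\end{equation*}
The theorem amounts to the claim that, after a replacement $x\mapsto x+h(x)$ for some linear $h\colon\mathfrak{D}_m(\mathbb{C})\to I$, the error $\phi$ vanishes. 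Since $[L,I]=0$, such a replacement modifies $\phi$ by
\begin{equation*}
\delta h(x,y)=(h(x),y)-h([x,y]_{\mathfrak{D}_m}).
\end{equation*}

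Applying the Leibniz identity to triples in $\{J,P_k^+,Q_k^-,T\}$ and exploiting $[L,I]=0$ collapses it to the $2$-cocycle condition
\begin{equation*}
\phi(x,[y,z]_{\mathfrak{D}_m})=\phi([x,y]_{\mathfrak{D}_m},z)-\phi([x,z]_{\mathfrak{D}_m},y)+(\phi(x,y),z)-(\phi(x,z),y),
\end{equation*}
so what remains is to find $h$ with $\delta h=-\phi$.

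The main obstacle, and the heart of the proof, is solving this system explicitly using the eigenvalue structure of \eqref{eq6}. The element $J$ acts on $I$ diagonally with the non-zero eigenvalues $\frac{im}{m+2}$ on $\{X_1,X_{m+2}\}$ and $-\frac{2i}{m+2}$ on $\{X_2,\dots,X_{m+1}\}$. For each $x\in\{J,P_k^+,Q_k^-,T\}$ with $[x,J]_{\mathfrak{D}_m}=\lambda_x x$ (so $\lambda_x\in\{0,-i,i,0\}$), the equation $\delta h(x,J)=-\phi(x,J)$ reads $(h(x),J)-\lambda_x h(x)=-\phi(x,J)$; the operator $v\mapsto(v,J)-\lambda_x v$ on $I$ has eigenvalues $\frac{im}{m+2}-\lambda_x$ and $-\frac{2i}{m+2}-\lambda_x$, all non-zero for $m\geq 1$, and is therefore invertible. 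This uniquely determines $h(J),h(P_k^+),h(Q_k^-),h(T)$. Finally, one verifies that with this choice all remaining equations $\delta h(x,y)=-\phi(x,y)$ hold automatically: applying the cocycle identity to the triple $(x,y,J)$ expresses $\phi(x,y)$ in terms of values of $\phi(\cdot,J)$ already cancelled by $h$, and the analogous operator $(\lambda_x+\lambda_y)\,\mathrm{id}-R_J$ (with $R_J(v)=(v,J)$) remains invertible on $I$ for every pair $(x,y)$. The proof thus reduces to routine linear algebra, made possible by the non-degeneracy of the $J$-action on $I$.
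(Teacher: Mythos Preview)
Your argument is correct and complete. The key eigenvalue check works: the right action of $J$ on $I$ has spectrum $\{\tfrac{im}{m+2},-\tfrac{2i}{m+2}\}$, while $\lambda_x+\lambda_y\in\{0,\pm i,\pm 2i\}$, and for $m\geq 1$ these sets are disjoint, so $(\lambda_x+\lambda_y)\,\mathrm{id}-R_J$ is indeed invertible and the residual cocycle $\psi=\phi+\delta h$ vanishes.

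Your route differs from the paper's. The paper proceeds by explicit, piecemeal basis changes: first replacing $J$ so that $[J,J]=0$ (using invertibility of $R_J$), then replacing each $P_k^+,Q_k^-$ so that $[J,P_k^+]=iP_k^+$ and $[J,Q_k^-]=-iQ_k^-$ (using that the adjoint eigenvalues $\pm i$ are nonzero and that $[L,I]=0$), redefining $T$ via $[P_1^+,Q_1^-]$, and then running through a list of a dozen specific triples in the Leibniz identity to force the remaining brackets to be Lie. You instead package the whole computation as the vanishing of a Leibniz $2$-cocycle, solve for the cochain $h$ in one stroke from the equations with second argument $J$, and use a single instance of the cocycle identity on $(x,y,J)$ to kill everything else. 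The paper's approach is more elementary and self-contained; yours is shorter, explains \emph{why} the argument succeeds (non-resonance of the $J$-weights), and would transfer unchanged to any module whose $J$-spectrum avoids $\{0,\pm i,\pm 2i\}$.
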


\begin{proof}
Here we shall use the multiplication table \eqref{eq3} of the complex Diamond Lie algebra.
Let us assume that \[[J,J]=\sum\limits_{k=1}^{m+2}\delta_kX_k.\] Then by
setting \[J' \coloneqq J+\frac{i(m+2)\delta_1}{m}X_1-\sum\limits_{k=2}^{m+1}\frac{i(m+2)\delta_i}{2}X_i+\frac{i(m+2)\delta_{m+2}}{m}X_{m+2},\] we can
assume that $[J,J]=0$.

Let us denote
\[[J,P_k^+]=iP_k^++\sum\limits_{s=1}^{m+2}\alpha_{k,s}X_s, \quad [J,Q_k^-]=-iQ_k^-+\sum\limits_{s=1}^{m+2}\beta_{k,s}X_s, \quad 1\leq k\leq m.\]

Taking the following basis transformation:
\[J^{\prime}=J, \quad  P_k^{+\prime}=P_k^+-\sum\limits_{s=1}^{m+2}i\alpha_{k,s}X_s, \quad Q_k^{-\prime}=Q_k^-+\sum\limits_{k=2}^{m+2}i\beta_{k,s}X_s, \quad T'=-i/2[P_1^{+\prime},Q_1^{-\prime}], \quad 1\leq k\leq m, \]
we can assume that
\[[J,P_k^+]=iP_k^+, \quad [J,Q_k^-]=-iQ_k^-, \quad [P_1^+,Q_1^-]=2iT, \quad 1\leq k\leq m.\]

By applying the  Leibniz identity to the triples $\{J,J,P_k^+\}, \ \{J,J,Q_k^-\}$, we derive
\[[P_k^+,J]=-[J,P_k^+], \qquad [Q_k^-,J]=-[J,Q_k^-], \qquad 1\leq k\leq m.\]

By considering Leibniz identity for the triples we have the following constraints.
\[\begin{array}{llll}
\text{ Leibniz identity }& & \text{ Constraints } &\\[1mm]
\hline \hline\\
\{P_1^+,J,Q_1^-\}&\Longrightarrow &[T,J]=0, &  \\[1mm]
\{J,T,J\}&\Longrightarrow &[J,T]=0, &  \\[1mm]
\{J,P_k^+,Q_s^-\}&\Longrightarrow &[P_k^+,Q_s^-]=-[Q_s^-,P_k^+], & 1\leq k,s\leq m,\\[1mm]
\{P_k^+,J,Q_s^-\}&\Longrightarrow &[P_k^+,Q_s^-]=0, & 1\leq k,s\leq m, \ k\neq s,\\[1mm]
\{P_k^+,J,Q_k^-\}&\Longrightarrow &[P_k^+,Q_k^-]=2iT, & 2\leq k\leq m, \\[1mm]
\{P_k^+,J,P_s^+\}&\Longrightarrow &[P_k^+,P_s^+]=0, & 1\leq k,s\leq m,\\[1mm]
\{Q_k^-,J,Q_s^-\}&\Longrightarrow &[Q_k^-,Q_s^-]=0, & 1\leq k,s\leq m,\\[1mm]
\{J,P_k^+,T\}&\Longrightarrow &[P_k^+,T]=0, & 1\leq k\leq m,  \\[1mm]
\{J,Q_k^-,T\}&\Longrightarrow & [Q_k^-,T]=0 & 1\leq k\leq m, \\[1mm]
\{P_k^+,P_k^+,Q_k^-\}&\Longrightarrow &[T,P_k^+]=0, & 1\leq k\leq m,  \\[1mm]
\{Q_k^-,Q_k^-,P_k^+\}&\Longrightarrow & [T,Q_k^-]=0, & 1\leq k\leq m. \\[1mm]
\end{array}\]
\end{proof}

\section{Leibniz algebras constructed by representation of general Diamond algebra which is isomorphic to a subalgebra of $\mathfrak{sp}(2m+2,\mathbb{R})$}

In this section we are going to study  Leibniz algebras $L$ such that $L/ I \cong \mathfrak{D}_m$ and the
$\mathfrak{D}_m$-module $I$ is a faithful representation. Moreover, this representation factorizes through  $\mathfrak{sp}(2m+2,\mathbb{R})$.

\begin{pr}
Let $\mathfrak{D}_m$ be a $(2m+2)$-dimensional general Diamond Lie algebra with basis $\{J,P_1,P_2,\dots,P_m,Q_1,Q_2,\dots,Q_m,T\}$.
Then it is isomorphic to a subalgebra of $\mathfrak{sp}(2m+2,\mathbb{R})$ by
\begin{multline*}
a J+\sum\limits_{k=1}^mb_kP_k+\sum\limits_{k=1}^mc_kQ_k+d T \mapsto  \\
\left(\begin{array}{ccccc|ccccc}
0&b_1&b_2&\dots&b_m&c_m&\dots&c_2&c_1&2d\\
0&0&0&\dots&0&0&\dots&0&-a&c_1\\
0&0&0&\dots&0&0&\dots&-a&0&c_2\\
\vdots&\vdots&\vdots&\ddots&\vdots&\vdots&\vdots&\vdots&\vdots&\vdots\\
0&0&0&\dots&0&-a&\dots&0&0&c_m\\ \hline
0&0&0&\dots&a&0&\dots&0&0&-b_m\\
\vdots&\vdots&\vdots&\ddots&\vdots&\vdots&\ddots&\vdots&\vdots&\vdots\\
0&0&a&\dots&0&0&\dots&0&0&-b_2\\
0&a&0&\dots&0&0&\dots&0&0&-b_1\\
0&0&0&\dots&0&0&\dots&0&0&0
\end{array}\right).
\end{multline*}
\end{pr}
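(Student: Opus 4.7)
The statement asserts that the prescribed linear map $\varphi: \mathfrak{D}_m \to \mathfrak{gl}(2m+2,\mathbb{R})$ is a Lie algebra embedding whose image lies in $\mathfrak{sp}(2m+2,\mathbb{R})$. My plan is to verify, in order, (i) that $\varphi(x) \in \mathfrak{sp}(2m+2,\mathbb{R})$ for an explicit choice of symplectic form, (ii) that $\varphi$ is a homomorphism of Lie algebras, and (iii) that $\varphi$ is injective.

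For (i), the first task is to identify which symplectic form is intended. The block structure of $\varphi(aJ + \sum_k b_kP_k + \sum_k c_kQ_k + dT)$ — with the scalar $a$ appearing along the anti-diagonal of the top-right $(m+1)\times(m+1)$ block (with sign $-$) and along the anti-diagonal of the bottom-left block (with sign $+$) — points to $\Omega = \bigl(\begin{smallmatrix} 0 & S \\ -S & 0 \end{smallmatrix}\bigr)$, where $S$ is the $(m+1)\times(m+1)$ exchange matrix (ones on the anti-diagonal). Writing the image as $M = \bigl(\begin{smallmatrix} A & B \\ C & D \end{smallmatrix}\bigr)$, the symplectic condition $M^{T}\Omega + \Omega M = 0$ decomposes into the three block identities $D = -SA^{T}S$, $SB = (SB)^{T}$, and $SC = (SC)^{T}$. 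By linearity of $\varphi$ it suffices to check these for each of the four generators $\varphi(J),\varphi(P_k),\varphi(Q_k),\varphi(T)$ individually; each such generator is extremely sparse, so each verification reduces to a short entry-by-entry comparison of anti-diagonal terms.

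For (ii), I would directly compute the commutators $[\varphi(J),\varphi(P_k)]$, $[\varphi(J),\varphi(Q_k)]$, $[\varphi(P_k),\varphi(Q_k)]$, together with all other pairwise products $[\varphi(P_j),\varphi(Q_k)]$ with $j\neq k$, $[\varphi(P_j),\varphi(P_k)]$, $[\varphi(Q_j),\varphi(Q_k)]$, and commutators involving $\varphi(T)$, and check that they match the brackets of $\mathfrak{D}_m$. Because each generator has at most two non-zero rows and columns, every product $\varphi(x)\varphi(y)$ is a sum of at most two rank-one matrices, and the resulting $(2m+2)\times(2m+2)$ matrix can be read off from a handful of index comparisons. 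The coefficient $2$ in the entry $(1,2m+2) = 2d$ of $\varphi(T)$ precisely accounts for the contribution of $\varphi(P_k)\varphi(Q_k)-\varphi(Q_k)\varphi(P_k)$ at that position. Step (iii) is immediate: the scalars $a,b_1,\dots,b_m,c_1,\dots,c_m,d$ occupy pairwise disjoint entries of the image matrix (e.g.\ $d$ appears only at $(1,2m+2)$, $b_k$ only at $(1,k+1)$ and $(2m+3-k,2m+2)$, and so on), so $\varphi(x)=0$ forces $x=0$.

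The main obstacle is really step (i): the statement does not specify $\Omega$, and the standard choice $\Omega_0 = \bigl(\begin{smallmatrix} 0 & I \\ -I & 0 \end{smallmatrix}\bigr)$ does \emph{not} place the image into $\mathfrak{sp}$. One has to read off from the shape of $\varphi(J)$ that the exchange matrix $S$ must be inserted in place of $I$. Once $\Omega$ is pinned down, the remaining work is routine index bookkeeping, the only delicate point being to keep straight the shifted index $m+2-k$ that governs the anti-diagonal placement of $-a$ in rows $2,\dots,m+1$ versus $+a$ in rows $m+2,\dots,2m+1$, and to match these with the basis indexing of $P_k$ and $Q_k$.
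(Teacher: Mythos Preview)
Your plan is correct and follows essentially the same route as the paper's own proof, which simply defines $\varphi$ on the generators $J,P_k,Q_k,T$ via elementary matrices, asserts that one checks $[\varphi(x),\varphi(y)]=\varphi(x)\varphi(y)-\varphi(y)\varphi(x)$ on all pairs of basis elements, and concludes that $\varphi$ is an injective homomorphism. Your step (i)---identifying the symplectic form as $\Omega=\bigl(\begin{smallmatrix}0&S\\-S&0\end{smallmatrix}\bigr)$ with $S$ the exchange matrix and verifying the block conditions---is a genuine improvement, since the paper never specifies $\Omega$ nor checks that the image actually lands in $\mathfrak{sp}(2m+2,\mathbb{R})$.
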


\begin{proof}
Consider the bilinear map $\varphi \colon \mathfrak{D}_m \to \mathfrak{sp}(2m+2,\mathbb{R})$ given by
\begin{align*}
\varphi(J) & = -\sum\limits_{s=2}^{m+1}e_{k,2m+3-k}+\sum\limits_{s=m+2}^{2m+1}e_{k,2m+3-k},\qquad \varphi(T) = 2e_{1,2m+2},\\
\varphi(P_k) & =e_{1,1+k}-e_{2m+2-k,2m+2}, \qquad \varphi(Q_k)=e_{1,2m+2-k}+e_{k+1,2m+2}, \qquad 1 \leq k \leq m.
\end{align*}

By checking $[\varphi(x),\varphi(y)]=\varphi(x)\varphi(y)-\varphi(y)\varphi(x)$ for all  $x,y \in \mathfrak{D}_m$, we verify that $\varphi$ is an injective hom+omorphism of algebras.
\end{proof}

Let us denote by $V=\mathbb{R}^{2m+2}$ the natural $\varphi(\mathfrak{D}_m)$-module and endow it with a $\mathfrak{D}_m$-module structure, $V \times \mathfrak{D}_m \to V$,
given  by  \[(x, e) \coloneqq  x \varphi(e),\]
where $x \in V$ and $e\in\mathfrak{D}_m$.

Then we obtain
\begin{equation}\label{eq7}\left\{\begin{array}{ll}
(X_k,J) = - X_{2m+3-k}, & 2 \leq k \leq m+1,\\[1mm]
(X_k,J) = X_{2m+3-k}, & m+2 \leq k \leq 2m+1,\\[1mm]
(X_1,P_k) = X_{k+1}, & 1 \leq k \leq m,\\[1mm]
(X_{2m+2-k},P_k) = - X_{2m+2}, & 1 \leq k \leq m,\\[1mm]
(X_1,Q_k) = X_{2m+2-k}, & 1 \leq k \leq m,\\[1mm]
(X_{k+1},Q_k) = X_{2m+2}, & 1 \leq k \leq m, \\[1mm]
(X_1,T) = 2X_{2m+2}, & 1 \leq k \leq m,\\[1mm]
\end{array}\right.\end{equation}
and the remaining products in the action being zero.

\begin{teo} An arbitrary Leibniz algebra with corresponding Lie algebra $\mathfrak{D}_m$ and $I$ associated with $\mathfrak{D}_m$-module defined by \eqref{eq7} admits a basis $\{J,P_1,P_2,\dots,P_m,Q_1,Q_2,\dots,Q_m,T,X_1,X_2,\dots,X_{2m+2}\}$ such that the  multiplication table $[\mathfrak{D}_m,\mathfrak{D}_m]$ has the following form:

\[ \left\{\begin{array}{ll}
[J,J]=a_1X_{2m+2}, & [J,P_k]=-[P_k,J]=Q_k,   \\[1mm]
[J,Q_k]=-[Q_k,J]=-P_k, & [P_k,Q_k]=-[Q_k,P_k]=T, \\[1mm]
[P_k,P_s]=[Q_k,Q_s]=b_{k,s}X_{2m+2}, & [P_k,Q_s]=[Q_k,P_s]=c_{k,s}X_{2m+2},
\end{array}\right.\]
with the restrictions
\[ b_{k,s}=-b_{s,k}, \qquad c_{k,s}=c_{s,k},\]
where $1\leq k,s\leq m, \ k\neq s$.
\end{teo}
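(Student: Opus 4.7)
The plan is to mimic the strategy of the proof of Theorem 1 for the complex case: write each bracket $[e,f]$ with $e,f\in\{J,P_k,Q_k,T\}$ as its classical Lie value plus an unknown correction sitting in $I$, perform successive basis changes of the form $e\mapsto e+(\text{element of }I)$ to normalize away as many corrections as possible, and finally apply the Leibniz identity on a library of triples to pin down the surviving parameters. Throughout, one uses that $I$ is contained in the right annihilator, so $[L,I]=0$, and hence any change $e\mapsto e+\sum c_iX_i$ affects a bracket $[e,f]$ by only a single term $\sum c_i[X_i,f]$ computed from the module structure \eqref{eq7}.

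First I would expand
\[[J,J]=\sum_{k=1}^{2m+2}\alpha_kX_k,\quad [J,P_k]=Q_k+\sum_s\gamma_{k,s}X_s,\quad [J,Q_k]=-P_k+\sum_s\delta_{k,s}X_s,\quad [P_k,Q_k]=T+\sum_s\epsilon_{k,s}X_s,\]
and similarly for $[J,T]$, $[P_k,P_s]$, $[Q_k,Q_s]$, $[P_k,Q_s]$ with $k\ne s$. From \eqref{eq7}, the image of $[-,J]$ acting on $V$ is $\langle X_2,\dots,X_{2m+1}\rangle$, so the shift $J\mapsto J+\sum c_iX_i$ kills the coefficients $\alpha_2,\dots,\alpha_{2m+1}$ of $[J,J]$; the coefficients $\alpha_1$ and $\alpha_{2m+2}$ cannot be removed this way because $X_1$ and $X_{2m+2}$ are annihilated by the $J$-action. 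The coefficient $\alpha_1$ will vanish below by a Leibniz identity, and $\alpha_{2m+2}$ is exactly the free parameter $a_1$ in the statement. Analogous shifts of $P_k,Q_k,T$ by elements of $I$, using that the images of $[-,P_k]$, $[-,Q_k]$, $[-,T]$ are respectively $\langle X_{k+1},X_{2m+2}\rangle$, $\langle X_{2m+2-k},X_{2m+2}\rangle$, $\langle X_{2m+2}\rangle$, reduce $[J,P_k]$, $[J,Q_k]$, $[J,T]$, $[P_k,Q_k]$ to their classical Lie values modulo the common annihilator $\langle X_{2m+2}\rangle$; a final $T$-shift absorbs the surviving $X_{2m+2}$-component of $[P_k,Q_k]$.

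Next I would apply the Leibniz identity to several families of triples. The identities on $\{J,J,P_k\}$, $\{J,J,Q_k\}$, $\{J,J,T\}$ yield $[P_k,J]=-[J,P_k]$, $[Q_k,J]=-[J,Q_k]$, $[T,J]=[J,T]=0$, and force $\alpha_1=0$, leaving $[J,J]=a_1X_{2m+2}$. The identities on $\{J,P_k,P_s\}$, $\{J,Q_k,Q_s\}$, $\{J,P_k,Q_s\}$ imply that $[P_k,P_s]$, $[Q_k,Q_s]$, $[P_k,Q_s]$ (for $k\ne s$) lie in $\langle X_{2m+2}\rangle$ and satisfy the identifications $[P_k,P_s]=[Q_k,Q_s]=b_{k,s}X_{2m+2}$, $[P_k,Q_s]=[Q_k,P_s]=c_{k,s}X_{2m+2}$; Leibniz antisymmetry in $(k,s)$ for the first pair gives $b_{k,s}=-b_{s,k}$, while Leibniz for the second pair, combined with $[J,P_k]=Q_k$, forces $c_{k,s}=c_{s,k}$. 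Triples of the form $\{P_k,P_k,Q_k\}$, $\{Q_k,Q_k,P_k\}$, $\{J,P_k,T\}$, $\{J,Q_k,T\}$ then eliminate $[P_k,T]$, $[Q_k,T]$, $[T,P_k]$, $[T,Q_k]$.

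The main technical hurdle is careful bookkeeping of the $X_{2m+2}$ components, which are the only non-Lie contributions that survive the normalization step because $X_{2m+2}$ lies in the common annihilator of $\varphi(\mathfrak{D}_m)$. In particular one must recognise that the Leibniz identity on $\{J,P_k,Q_s\}$ gives $[P_k,Q_s]=[Q_k,P_s]$ rather than the naive anti-symmetry $[P_k,Q_s]=-[Q_s,P_k]$: both sides of the latter reduce to multiples of $X_{2m+2}$, which is invisible to the $\mathfrak{D}_m$-action, so the Leibniz constraint yields a symmetric pairing between $(P_k,Q_s)$ and $(Q_k,P_s)$. Once this identification is made, renaming the resulting scalars $a_1$, $b_{k,s}$, $c_{k,s}$ gives exactly the tabulated multiplication.
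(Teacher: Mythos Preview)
Your overall strategy matches the paper's, but there is a genuine gap: you assert that the Leibniz identity on $\{J,J,T\}$ forces $[J,T]=0$, and this is not true. Writing $[J,T]=\sum_k r_k X_k$, the identity $[J,[J,T]]=[[J,J],T]-[[J,T],J]$ together with $[J,J]=\alpha_1X_1+\alpha_{2m+2}X_{2m+2}$ yields only $\alpha_1=0$ and $r_k=0$ for $2\le k\le 2m+1$; the coefficients $r_1$ and $r_{2m+2}$ survive. The $X_{2m+2}$-component $r_{2m+2}$ can indeed be absorbed by the shift $J\mapsto J-\tfrac12 r_{2m+2}X_1$ (since $[X_1,T]=2X_{2m+2}$), but the $X_1$-component cannot be removed by \emph{any} change of basis: from \eqref{eq7} the vector $X_1$ is not in the image of right multiplication by any element of $\mathfrak{D}_m$. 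So one is left with $[J,T]=\rho_1X_1$ for an a priori unconstrained scalar $\rho_1$.

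This parameter propagates. The identities you invoke on $\{J,P_k,T\}$, $\{J,Q_k,T\}$ give $[Q_k,T]=\rho_1X_{k+1}$ and $[P_k,T]=-\rho_1X_{2m+2-k}$, not $0$; further Leibniz relations then force $[P_k,P_k]=[Q_k,Q_k]=\tfrac12\rho_1X_1$ and $[T,P_k]=\tfrac32\rho_1X_{2m+2-k}$, $[T,Q_k]=-\tfrac32\rho_1X_{k+1}$. None of the triples you list kills $\rho_1$. In the paper's proof $\rho_1=0$ is obtained only at the very end, from the Leibniz identity on $\{P_k,P_k,P_s\}$ with $k\ne s$: since $[P_k,P_k]=\tfrac12\rho_1X_1$ and $[X_1,P_s]=X_{s+1}$, while $[P_k,P_s]\in\langle X_{2m+2}\rangle$ is annihilated on the right, the identity reduces to $\tfrac12\rho_1X_{s+1}=0$. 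Your outline also glosses over the reduction of $[P_k,P_s]$, $[Q_k,Q_s]$, $[P_k,Q_s]$ (for $k\ne s$) to $\langle X_{2m+2}\rangle$: the identities on $\{J,P_k,P_s\}$ etc.\ only relate these brackets to one another, and the paper needs the four families $\{P_k,P_s,J\}$, $\{Q_k,Q_s,J\}$, $\{Q_k,P_s,J\}$, $\{P_k,Q_s,J\}$ taken together to eliminate the components $X_2,\dots,X_{2m+1}$.
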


\begin{proof}
Let us assume that \[[J,J]=\sum\limits_{k=1}^{m+2}\delta_kX_k.\] Then by
setting \[J^{\prime}=J+\sum\limits_{k=2}^{m+1}\delta_{2m+3-k}X_k-\sum\limits_{k=m+2}^{2m+1}\delta_{2m+3-k}X_k,\] we can
assume that \[[J,J]=\delta_1X_1+\delta_{2m+2}X_{2m+2}.\]

Let us suppose that $[J,T]=\sum\limits_{k=1}^{2m+2}\rho_kX_k$ and considering the Leibniz identity to $\{J,T,J\}$, we get
\[\delta_1=0, \quad [J,T]=\rho_1X_1+\rho_{2m+2}X_{2m+2}.\]

By making the change of basis element $J'=J-\frac{1}{2}\rho_{2m+2}X_1$
we get the \[[J,T]=\rho_1X_1.\]

Let us suppose
\[[J,P_k]=Q_k+\sum\limits_{s=1}^{2m+2}\lambda_{k,s}X_s, \quad [J,Q_k]=-P_k+\sum\limits_{s=1}^{2m+2}\mu_{k,s}X_s, \quad 1\leq k\leq m.\]

Taking the following basis transformation:
 \[J^{\prime}=J, \quad  P_k^{\prime}=P_k-\sum\limits_{s=1}^{2m+2}\mu_{k,s}X_s, \quad Q_k^{\prime}=Q_k+\sum\limits_{k=1}^{2m+2}\lambda_{k,s}X_s, \quad T^{\prime}=[P_1^{\prime},Q_1^{\prime}], \quad 1\leq k\leq m, \]
we can assume that
\[[J,P_k]=Q_k, \qquad [J,Q_k]=-P_k, \qquad [P_1,Q_1]=T, \qquad 1\leq k\leq m.\]

By applying the  Leibniz identity to the triples $\{J,J,P_k\}, \ \{J,J,Q_k\}$ we derive
\[[P_k,J]=-[J,P_k], \qquad [Q_k,J]=-[J,Q_k], \qquad 1\leq k\leq m.\]

By verifying the Leibniz identity on elements, we have the following the restrictions.
\[\begin{array}{llll}
\text{ Leibniz identity }& & \text{ Constraints } &\\[1mm]
\hline \hline\\
\{J,P_k,T\}&\Longrightarrow &[Q_k,T]=\rho_1X_{k+1}, & 1\leq k\leq m, \\[1mm]
\{J,Q_k,T\}&\Longrightarrow &[P_k,T]=-\rho_1X_{2m+2-k}, & 1\leq k\leq m, \\[1mm]
\{P_1,T,Q_1\}&\Longrightarrow &[T,T]=0, &  \\[1mm]
\end{array}\]

We set
\[\left\{\begin{array}{ll}
[P_j,Q_j]=T+\sum\limits_{t=1}^{2m+2}\beta_{j,t}X_t,  &[Q_k,P_k]=-T+\sum\limits_{t=1}^{2m+2}\gamma_{k,t}X_t, \\[1mm]
[P_k,P_s]=\sum\limits_{t=1}^{2m+2}\eta_{k,s,t}X_t,  &[Q_k,Q_s]=\sum\limits_{t=1}^{2m+2}\theta_{k,s,t}X_t, \\[1mm]
[P_k,Q_s]=\sum\limits_{t=1}^{2m+2}\nu_{k,s,t}X_t, \ k\neq s,  &[Q_k,P_s]=\sum\limits_{t=1}^{2m+2}\xi_{k,s,t}X_t, \ k\neq s,
\end{array}\right.\]
where  \ $ 2\leq j\leq m, \ 1\leq k,s\leq m$.

By applying the  Leibniz identity  to $\{P_k,P_s,T\}$ and $\{Q_k,Q_s,T\}$, we obtain
\[\eta_{k,k,1}=\theta_{k,k,1}=\frac{1}{2}\rho_1, \qquad \eta_{k,s,1}=\theta_{k,s,1}=0, \qquad  1\leq k,s\leq m, \ k\neq s.  \]

By considering the next equality
\begin{align*}
\rho_1X_1 & =[J,T]=\big[J,[P_k,Q_k]\big]=\big[[J,P_k],Q_k\big]-\big[[J,Q_k],P_k\big]\\
{} &= [Q_k,Q_k]+[P_k,P_k]=\rho_1X_1+\sum\limits_{s=2}^{2m+2}(\eta_{k,k,s}+\theta_{k,k,s})X_s,
\end{align*}
we get \[ \theta_{k,k,s}=-\eta_{k,k,s}, \qquad 2\leq s\leq 2m+2, \ 1\leq k\leq m.\]

Analogously, by applying the  Leibniz identity  to $\{J,P_k,Q_s\},\{J,Q_k,Q_s\}$ and $\{J,P_k,P_s\}$, we get
\begin{equation}\label{eq8}[P_k,P_s]=-[Q_s,Q_k], \quad [P_k,Q_s]=[P_s,Q_k], \quad [Q_k,P_s]=[Q_s,P_k], \quad 1\leq k,s\leq m, \ k\neq s.\end{equation}

By applying the  Leibniz identity to $\{P_1,J,Q_1\}$ and $\{P_1,P_1,Q_1\}$,
we have
\[[T,J]=\sum\limits_{s=2}^{2m+2}2\eta_{1,1,s}X_s, \qquad [T,P_1]=\frac{3}{2}\rho_1X_{2m+1}+\eta_{1,1,2}X_{2m+2}.\]

By the next identity
\begin{align*}
[Q_1,[J,P_1]]& =[[Q_1,J],P_1]-[[Q_1,P_1],J]=[P_1,P_1]-[-T+\sum\limits_{s=1}^{2m+2}\gamma_{1,s}X_s,J]\\
{} &=\frac{1}{2}\rho_1X_1+\sum\limits_{s=1}^{2m+2}\eta_{1,1,s}X_s+\sum\limits_{s=2}^{2m+2}2\eta_{1,1,s}X_s+
\sum\limits_{s=2}^{m+1}\gamma_{1,s}X_{2m+3-s}-\sum\limits_{s=m+2}^{2m+1}\gamma_{1,s}X_{2m+3-s}\\
{} &=\frac{1}{2}\rho_1X_1+\sum\limits_{s=1}^{2m+2}3\eta_{1,1,s}X_s+\sum\limits_{s=2}^{m+1}\gamma_{1,s}X_{2m+3-s}-\sum\limits_{s=m+2}^{2m+1}\gamma_{1,s}X_{2m+3-s}.
\end{align*}
On the other hand
\[[Q_1,[J,P_1]]=[Q_1,Q_1]=\frac{1}{2}\rho_1X_1-\sum\limits_{s=1}^{2m+2}\eta_{1,1,s}X_s,\]
and from this we  deduce
\[\gamma_{1,s}=-4\eta_{1,1,2m+3-s}, \qquad \gamma_{1,k}=4\eta_{1,1,2m+3-k},  \qquad \eta_{1,1,2m+2}=0,\]
with $ 2\leq s\leq m+1, \quad  m+2\leq k\leq 2m+1$.

Now by considering the identity
\begin{align*}
T& =[P_1,Q_1]=[P_1,[J,P_1]]=[[P_1,J],P_1]-[[P_1,P_1],J]\\
{} &=-[Q_1,P_1] -[\frac{1}{2}\rho_1X_1+\sum\limits_{k=2}^{2m+1}\eta_{1,1,k}X_k,J]\\
{} &=T-\gamma_{1,1}X_1+\sum\limits_{k=2}^{m+1}4\eta_{1,1,2m+3-k}X_k-\sum\limits_{k=m+2}^{2m+1}4\eta_{1,1,2m+3-k}X_k\\
{}& \quad +\gamma_{1,2m+2}X_{2m+2}+\sum\limits_{k=2}^{m+1}\eta_{1,1,k}X_{2m+3-k}-\sum\limits_{k=m+2}^{2m+1}\eta_{1,1,k}X_{2m+3-k},
\end{align*}
we get \[\gamma_{1,1}=\gamma_{1,2m+2}=\eta_{1,1,k}=0, \quad 2\leq k\leq 2m+1.\]

By the next  Leibniz identity
\begin{align*}
\rho_1X_2 & =[Q_1,T]=[Q_1,[P_1,Q_1]]=[[Q_1,P_1],Q_1]-[[Q_1,Q_1],P_1]\\
{}&=-[T,Q_1]-[\frac{1}{2}\rho_1X_1,P_1]=-[T,Q_1]-\frac{1}{2}\rho_1X_2,
\end{align*}
we obtain \[[T,Q_1]=-\frac{3}{2}\rho_1X_2.\]

Hence, we have \[\left\{\begin{array}{ll}
[T,J]=0,  &[Q_1,P_1]=-T, \\[1mm]
[P_1,P_1]=\frac{1}{2}\rho_1X_1, &[Q_1,Q_1]=\frac{1}{2}\rho_1X_1, \\[1mm]
[T,P_1]=\frac{3}{2}\rho_1X_{2m+1},  &[T,Q_1]=-\frac{3}{2}\rho_1X_{2},
\end{array}\right.\]

By using the next Leibniz identity
\begin{align*}
-\rho_1X_{2m+2-k}&=[P_k,T]=[P_k,[P_k,Q_k]]=[[P_k,P_k],Q_k]-[[P_k,Q_k],P_k]\\
{}& =[\frac{1}{2}\rho_1X_1+\sum\limits_{s=2}^{2m+2}\eta_{k,k,s}X_s,Q_k]-[T+\sum\limits_{s=1}^{2m+2}\beta_{k,s}X_s,P_k] \\
{}&=\frac{1}{2}\rho_1X_{2m+2-k}+\eta_{k,k,k+1}X_{2m+2} -[T,P_k]-\beta_{k,1}X_{k+1}+\beta_{k,2m+2-k}X_{2m+2},
\end{align*}
we get
\[[T,P_k]=\frac{3}{2}\rho_1X_{2m+2-k}-\beta_{k,1}X_{k+1}+(\beta_{k,2m+2-k}+\eta_{k,k,k+1})X_{2m+2}, \quad 2\leq k\leq m.\]

By applying the Leibniz identity to the elements $\{P_k,J,Q_k\}$ and $\{Q_k,J,P_k\}$,  we get
\[\beta_{k,s}=\gamma_{k,s}=-2\eta_{k,k,2m+3-s}, \qquad \beta_{k,t}=\gamma_{k,t}=2\eta_{k,k,2m+3-t}, \qquad \eta_{k,k,2m+2}=0,\]
where  $2\leq k\leq m, \quad 2\leq s\leq m+1, \quad m+2\leq t\leq 2m+1$.

By the next Leibniz identity applied to $\{P_k,J,P_k\}$, we have
\[\gamma_{k,1}=-\beta_{k,1}, \qquad \eta_{k,k,s}=0, \qquad 2\leq k\leq m, \quad 2\leq s\leq 2m+1.\]

Now, by considering
\begin{align*}
\rho_1X_{k+1}&=[Q_k,T]=[Q_k,[P_k,Q_k]]=[[Q_k,P_k],Q_k]-[[Q_k,Q_k],P_k]\\
{}&=[-T-\beta_{k,1}X_1,Q_k]
-[\frac{1}{2}\rho_1X_1,P_k]=-[T,Q_k]-\beta_{k,1}X_{2m+2-k}-\frac{1}{2}\rho_1X_{k+1},
\end{align*}
 we get
\[[T,Q_k]=-\frac{3}{2}\rho_1X_{k+1}-\beta_{k,1}X_{2m+2-k}.\]

By using the Leibniz identity for $\{T,P_k,Q_k\}$,
we get \[\beta_{k,1}=0, \qquad 2\leq k\leq m.\]

So, we have
\[\left\{\begin{array}{l}
[P_k,Q_k]=-[Q_k,P_k]=T, \\[1mm]
[P_k,P_k]=[Q_k,Q_k]=\frac{1}{2}\rho_1X_1, \\[1mm]
[T,P_k]=\frac{3}{2}\rho_1X_{2m+2-k}, \\[1mm]
[T,Q_k]=-\frac{3}{2}\rho_1X_{k+1},
\end{array}\right.\]
where $2\leq k\leq m$.

By verifying Leibniz identity on elements, we obtain the following  restrictions.
\[\begin{array}{llll}
\text{ Leibniz identity }& & \text{ Constraints } &\\[1mm]
\hline \hline\\
\{P_k,P_s,T\}&\Longrightarrow &\eta_{k,s,1}=0, & 1\leq k,s\leq m, \ k\neq s, \\[1mm]
\{Q_k,Q_s,T\}&\Longrightarrow &\theta_{k,s,1}=0, & 1\leq k,s\leq m, \ k\neq s, \\[1mm]
\{P_k,Q_s,T\}&\Longrightarrow &\nu_{k,s,1}=0, & 1\leq k,s\leq m, \ k\neq s,  \\[1mm]
\{Q_k,P_s,T\}&\Longrightarrow &\xi_{k,s,1}=0, & 1\leq k,s\leq m, \ k\neq s.
\end{array}\]

By applying the Leibniz identity  to $\{P_k,P_s,J\}, \ \{Q_k,Q_s,J\}$,  we get
\[[[P_k,P_s],J]=-[Q_k,P_s]-[P_k,Q_s], \qquad [[Q_k,Q_s],J]=[Q_k,P_s]+[P_k,Q_s],\]
it follows that
\[[[P_k,P_s],J]=-[[Q_k,Q_s],J],\]
hence \[\xi_{k,s,2m+2}=-\nu_{k,s,2m+2}, \quad \theta_{k,s,t}=-\eta_{k,s,t}, \quad 1\leq k,s\leq m, \ 2\leq t\leq 2m+1, \ k\neq s.\]
and \[\begin{array}{ll}
\nu_{k,s,t}+\xi_{k,s,t}=-\eta_{k,s,2m+3-t}&2\leq t\leq m+1,\\
\nu_{k,s,t}+\xi_{k,s,t}=\eta_{k,s,2m+3-t}&m+2\leq t\leq 2m+1.
\end{array}\]

Let us consider the identity
\[ [[Q_k,P_s],J]=[Q_k,[P_s,J]]+[[Q_k,J],P_s]=-[Q_k,Q_s]+[P_k,P_s]\]

We have that $\theta_{k,s,2m+2}=\eta_{k,s,2m+2}$ and
\[\begin{array}{ll}
\xi_{k,s,t}=-2\eta_{k,s,2m+3-t},& \qquad 2\leq t\leq m+1,\\
\xi_{k,s,t}=2\eta_{k,s,2m+3-t},& \qquad m+2\leq t\leq m+1,
\end{array}\]
and
\[\begin{array}{ll}
\nu_{k,s,t}=\eta_{k,s,2m+3-t}, & \qquad 2\leq t\leq m+1,\\
\nu_{k,s,t}=-\eta_{k,s,2m+3-t}, & \qquad m+2\leq t\leq 2m+1.
\end{array}\]


Analogously,by applying the Leibniz identity to $\{P_k,Q_s,J\}$, we get
\[\begin{array}{ll}
\nu_{k,s,t}=-2\eta_{k,s,2m+3-t},& \qquad 2\leq t\leq m+1,\\
\nu_{k,s,t}=2\eta_{k,s,2m+3-t},& \qquad m+2\leq t\leq 2m+1.
\end{array}\]

We get that $\nu_{k,s,t}=0,  \ 1\leq k,s\leq m, \ 2\leq t\leq 2m+1, \ k\neq s$. It implies that $\eta_{k,s,t}=\xi_{k,s,t}=\theta_{k,s,t}=0$ for $1\leq k,s\leq m, \ 2\leq t\leq 2m+1, \ k\neq s$.

Hence, we have
\[\begin{array}{ll}
[P_k,P_s]=[Q_k,Q_s]=\eta_{k,s,2m+2}X_{2m+2}, & \qquad  1\leq k,s\leq m, \quad k\neq s, \\[1mm]
[P_k,Q_s]=[Q_k,P_s]=\nu_{k,s,2m+2}X_{2m+2}, &  \qquad 1\leq k,s\leq m, \quad k\neq s.
\end{array}\]

By equation \eqref{eq8} we have the following restrictions \[\eta_{k,s,2m+2}=-\eta_{s,k,2m+2}, \qquad \nu_{k,s,2m+2}=\nu_{s,k,2m+2}, \qquad 1\leq k,s\leq m, \ k\neq s.\]

Finally, we apply the Leibniz identity to the elements $\{P_k,P_k,P_s\}$ with $k\neq s$ and we obtain $\rho_1=0$.
We denote again $(\delta_{2m+2},\eta_{k,s,2m+2},\nu_{k,s,2m+2})=(a_1,b_{k,s},c_{k,s})$.
\end{proof}


\end{document}